\documentclass{amsart}[12pt]

\usepackage{amsmath}
\usepackage{amsfonts}
\usepackage{amssymb}

\newtheorem{theorem}{Theorem}
\newtheorem{lemma}{Lemma}

\theoremstyle{definition}

\begin{document}
\title[ Norm-attaining integral operators]%
{Norm-attaining integral operators on analytic function spaces}

\author{ chengji xiong \& junming liu}

\address{School of Mathematical Science, Huaiyin normal university, Huaian, Jiangsu, 223300, P.~R.~China} \email{chengji\_xiong@yahoo.com}
\address{Department of Mathematics, Shantou University, Shantou, Guangdong,
515063, \newline P.~R.~China} \email{08jmliu@stu.edu.cn}

\begin{abstract}
Any bounded analytic function $g$ induces a bounded integral operator $S_g$ on the Bloch space, the Dirichlet space and $BMOA$ respectively.
$S_g$ attains its norm on the Bloch space and $BMOA$ for any $g$, but does not attain its norm on the Dirichlet space for non-constant $g$.
Some results are also obtained for $S_g$ on the little Bloch space, and for another integral operator $T_g$ from the Dirichlet space to the Bergman space.
\end{abstract}
\thanks{This work was supported by NNSF of China (Grant No. 11171203).}
\keywords{integral operator, analytic function space,
Norm-attaining.} \subjclass[2000]{45P05, 30H}

\maketitle

\section{Introduction}
Given two Banach spaces $X$ and $Y$, a bounded linear operator $T$ from $X$ to
$Y$ is said to be norm-attaining if there is an $x\in X$ with $\|x\|=1$
such that $\|Tx\|=\|T\|$. Such an $x$ is called an extremal point (or extremal
function for the function space $X$) for the norm of $T$. Norm-attaining
operators have been studied extensively by many authors, such as \cite{SW},
\cite{KOS} and \cite{SS}.
Especially, there are several papers about norm-attaining operators on
analytic function spaces. For examples, norm-attaining composition
operators on Hardy spaces, Bergman spaces and the Dirichlet space were discussed in \cite{HC} and \cite{HC1} respectively.
Recently, Mar\'{i}a considered norm-attaining composition operators on the Blcoh space and the little Bloch space in \cite{MM}.

Let $\mathbb{D}$ be the open unit disc in the complex plane $\mathbb{C}$
and $H(\mathbb{D})$ the space of all analytic functions on $\mathbb{D}$.
Any $g\in H(\mathbb D)$ induces two integral operators $S_{g}$ and $T_g$ on $H(\mathbb D)$ as following
$$S_{g}f(z)\triangleq\int_{0}^{z}f'(w)g(w)dw\quad\mbox{and}\quad T_{g}f(z)\triangleq\int_{0}^{z}f(w)g'(w)dw.$$
Let $M_g$ denote the multiplication operator $M_g(f)=fg$. Then
$$(S_g+T_g)f=fg-f(0)g(0)=M_g(f)-f(0)g(0).$$
If $g$ is constant, then all results about $S_{g}$, $T_g$ or $M_g$  are trivial.
In general, $g$ is assumed to be non-constant.

Both integral operators have been studied by several authors. Most recently, bounded below of
these integral operators are studied on some analytic functions spaces. See \cite{AA} and its
references for more materials.

In this paper, the norm-attaining integral operator $S_g$ is investigated on several classic analytic function spaces.
In the section 2, the operator norm of $S_g$ on the Bloch space is estimated. We show that $S_g$ is norm-attaining for any $g$ on the Bloch space, but does not attain its norm on the little Bloch space for non-constant $g$. In the section 3, the operator norm of $S_g$ on the Dirchlet space is obtained. We show that $S_g$ does not attain its norm on the Dirichlet space for non-constant $g$. Another integral operator $T_g$ from the Dirichlet space to the Bergman space, is norm-attaining if and only if $g$ is linear. In the last section, we estimate the operator norm of $S_g$ on $BMOA$ and prove that $S_g$ is norm-attaining on $BMOA$ for any $g$.

\section{\bf Norm-attaining integral operators on $\mathcal B_0$ and $\mathcal B$}
The Bloch space $\mathcal B$ is the set of all functions $f\in H(\mathbb D)$ with
$$||f||_{\mathcal B}=|f(0)|+\sup_{z\in\mathbb D}(1-|z|^2)|f'(z)|<\infty.$$
The set of all functions in $\mathcal B$ satisfying
$$\lim_{|z|\rightarrow 1}(1-|z|^2)|f'(z)|=0$$
is called as the little Bloch space $\mathcal B_0$. The Bloch space $\mathcal B$ becomes a Banach
space under the norm $||\cdot||_{\mathcal B}$ and the little Bloch space
is a closed subspace of $\mathcal B$. See \cite{zhu} for more information on Bloch spaces.

A sequence $\{z_{n}\}\subset \mathbb{D}$ is said to be thin if
$$\lim_{k\rightarrow \infty}\prod_{j, j\neq k}\Big|\frac{z_{j}-z_{k}}{1-\overline{z_{j}}z_{k}}\Big|=1\ .$$
It is trivial that a thin sequence implies $z_n\rightarrow\partial\mathbb{D}$.
The Blaschke product
$$B(z)=\prod_{n=1}^{\infty}\frac{|z_{n}|}{z_{n}}\frac{z_{n}-z}{1-\overline{z}_{n}z}$$
is called thin if its zeros $\{z_{n}\}$ form a thin sequence. Gorkin and Mortini \cite{GM} showed that any sequence in $\mathbb{D}$ tending to the boundary admits a thin subsequence as following.

\noindent {\bf Lemma A}. If $\{z_{n}\}$ is a sequence in $\mathbb{D}$ and
$z_{n}\rightarrow \partial \mathbb{D}$, then there exists a thin
Blaschke product $B$ whose zeros are contained in the set $\{z_{n}:
n\in\mathbb{ N}\}$.

Let $B(z)$ be the thin Blaschke product with zeros $\{z_{n}\}$. Direct computation gives
$$(1-|z_{n}|^{2})|B'(z_{n})|=\prod_{k, k\neq n}\Big|\frac{z_{n}-z_{k}}{1-\overline{z}_{k}z_{n}}\Big|\ .$$
Using the definition of thin sequence and Schwarz Pick Lemma, it is easy to see
$$1\ge \sup_{z\in\mathbb{D}}(1-|z|^{2})|B'(z)|\ge \lim_{n\rightarrow \infty}(1-|z_{n}|^{2})|B'(z_{n})| =1\ ,$$
so the semi-norm of $B$ equals $1$.

In \cite{YR},  Yoneda gave the following result.

\noindent {\bf Lemma B}. A integral operator $S_{g}$ is bounded on the Bloch
space $\mathcal B$ $(\mathcal B_0)$ if and only if the inducing
function $g$ belongs to $H^{\infty}$, the set of all bounded
analytic functions on $\mathbb D$.

To determine whether or not an operator is norm-attaining, we need
to calculate the operator norm of it. For the operator norm of $S_g$
on $\mathcal B$ $(\mathcal B_0)$, we obtained following result.

\begin{lemma} If a integral operator $S_{g}$ is bounded on
$\mathcal B$ $(\mathcal B_0)$, then
$$\|S_{g}\|=\sup_{z\in \mathbb{D}}|g(z)|\ .$$
\end{lemma}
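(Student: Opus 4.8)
The plan is to squeeze $\|S_g\|$ between matching upper and lower bounds, both equal to $\sup_{z\in\mathbb D}|g(z)|$. The starting observation is that for any $f\in\mathcal B$ one has $S_gf(0)=0$ and $(S_gf)'(z)=f'(z)g(z)$, so the Bloch norm of the image collapses to a pure seminorm:
$$\|S_gf\|_{\mathcal B}=\sup_{z\in\mathbb D}(1-|z|^2)|f'(z)||g(z)|.$$
This identity is the workhorse for both directions, so I would establish it first.

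For the upper bound I would estimate $(1-|z|^2)|f'(z)||g(z)|\le\big(\sup_{w\in\mathbb D}|g(w)|\big)(1-|z|^2)|f'(z)|$ and take the supremum, giving $\|S_gf\|_{\mathcal B}\le\big(\sup_{w\in\mathbb D}|g(w)|\big)\|f\|_{\mathcal B}$ and hence $\|S_g\|\le\sup_{z\in\mathbb D}|g(z)|$. The same estimate holds on $\mathcal B_0$ by restriction, since $\mathcal B_0\subset\mathcal B$ carries the same norm.

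The substance lies in the lower bound, where I must produce, for each $a\in\mathbb D$, a unit-norm test function whose Bloch seminorm is localized at $a$. The natural candidate is built from the Möbius automorphism $\varphi_a(z)=\frac{a-z}{1-\bar a z}$ via the identity $(1-|z|^2)|\varphi_a'(z)|=1-|\varphi_a(z)|^2$. Setting $f_a=\varphi_a-\varphi_a(0)$ kills the value at the origin ($f_a(0)=0$) without altering the derivative, so $\|f_a\|_{\mathcal B}=\sup_{z}(1-|\varphi_a(z)|^2)=1$, the supremum being attained at $z=a$, where $\varphi_a(a)=0$. Thus $(1-|a|^2)|f_a'(a)|=1$, and evaluating the seminorm identity at the single point $a$ gives
$$\|S_gf_a\|_{\mathcal B}\ge(1-|a|^2)|f_a'(a)||g(a)|=|g(a)|.$$
Since $\|f_a\|_{\mathcal B}=1$, this yields $\|S_g\|\ge|g(a)|$ for every $a\in\mathbb D$, and taking the supremum over $a$ meets the upper bound.

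Finally I would verify that the same construction settles the $\mathcal B_0$ case: because $\varphi_a'$ is bounded on the closed disc, $(1-|z|^2)|f_a'(z)|\to0$ as $|z|\to1$, so $f_a\in\mathcal B_0$ and the lower-bound argument transfers verbatim. The main obstacle I anticipate is precisely the design of these test functions—one needs a family that is simultaneously of unit norm and forces the seminorm to concentrate at a prescribed point—but the Möbius identity resolves this cleanly and, pleasantly, avoids any appeal to the boundary behaviour of $g$, since the bound $\|S_g\|\ge|g(a)|$ is obtained at interior points directly.
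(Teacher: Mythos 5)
Your proposal is correct and follows essentially the same route as the paper: the same seminorm identity for $\|S_gf\|_{\mathcal B}$, the same upper bound, and the identical test functions $f_a=\sigma_a-a$ (the paper writes $f_{z_0}(z)=\frac{z_0-z}{1-\overline{z_0}z}-z_0$) for the lower bound. The only cosmetic difference is that you take a supremum of $|g(a)|$ over all $a$ while the paper runs an $\epsilon$-argument at a near-maximizing point; both also note $f_a\in\mathcal B_0$ to cover the little Bloch case.
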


\begin{proof}By the Lemma B, we know that $g$ is bounded.
For any $f\in\mathcal B$ with $\|f\|_{\mathcal B}=1$,
$$\|S_{g}f\|_{\mathcal{B}}=\sup_{z\in \mathbb D}(1-|z|^{2})|f'(z)||g(z)|
\le\|f\|_{\mathcal B}\cdot\sup_{z\in \mathbb{D}}|g(z)|=\sup_{z\in \mathbb{D}}|g(z)|\ ,$$
which implies
$$\|S_{g}\|\leq \sup_{z\in \mathbb{D}}|g(z)|\ .$$
Now we need only to show the reverse inequality. Denote $a=\sup_{z\in \mathbb{D}}|g(z)|$. Given any $\epsilon>0$, there exists $z_{0}\in \mathbb{D}$ such
that $|g(z_{0})|\geq a-\epsilon $.
Let
 $$f_{z_{0}}(z)=\frac{z_{0}-z}{1-\overline{z}_{0}z}-z_0.$$
Then $f_{z_0}\in\mathcal{B}_{0},\|f_{z_{0}}\|_{\mathcal{B}}=1$ and
$(1-|z_0|^2)|f'_{z_0}(z_0)|=1$. We have
\begin{equation}
\begin{split}\nonumber
\|S_{g}\| &\geq \|S_{g}f_{z_{0}}(z)\|_{\mathcal{B}}
\\&\geq (1-|z_{0}|^{2})|f'_{z_{0}}(z_{0})|\cdot|g(z_{0})|
 \\&\geq a-\epsilon\ .
 \end{split}
 \end{equation}
Since $\epsilon$ is arbitrary, the reverse inequality holds.
\end{proof}

\begin{theorem}If $g\in H^{\infty}$ is not constant, then the integral operator $S_{g}$ does not attain its norm on $\mathcal{B}_0$.
\end{theorem}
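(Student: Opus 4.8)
The plan is to argue by contradiction, using Lemma 1 to pin down the norm. By that lemma $\|S_g\|=a$, where I set $a:=\sup_{z\in\mathbb{D}}|g(z)|$; since $g$ is non-constant we have $a>0$. Suppose, for contradiction, that some $f\in\mathcal{B}_0$ with $\|f\|_{\mathcal{B}}=1$ satisfies $\|S_gf\|_{\mathcal{B}}=a$. Because $(S_gf)(0)=0$ and $(S_gf)'(z)=f'(z)g(z)$, the norm unwinds to $\|S_gf\|_{\mathcal{B}}=\sup_{z\in\mathbb{D}}(1-|z|^2)|f'(z)||g(z)|$. I would then introduce the auxiliary function $h(z):=(1-|z|^2)|f'(z)||g(z)|$ and record the pointwise bound $h(z)\le a\,(1-|z|^2)|f'(z)|\le a$, which is consistent with $\sup_{z}h(z)=a$.

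The crucial observation is that $h$ decays at the boundary. Since $f\in\mathcal{B}_0$ we have $(1-|z|^2)|f'(z)|\to 0$ as $|z|\to 1$, and $|g|\le a$, so $h(z)\to 0$ as $|z|\to 1$. Consequently there is some $r<1$ with $h(z)<a$ on the annulus $r<|z|<1$, which forces the value $a$ to be attained on the compact disc $\{z\in\mathbb{D}:|z|\le r\}$. By continuity of $h$ there, I obtain an interior point $z_0\in\mathbb{D}$ with $h(z_0)=a$.

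Now I would read off the consequences of equality. From $(1-|z_0|^2)|f'(z_0)|\le\|f\|_{\mathcal{B}}\le 1$ and $|g(z_0)|\le a$, together with $(1-|z_0|^2)|f'(z_0)||g(z_0)|=a$, both factors must be extremal; in particular $|g(z_0)|=a=\sup_{z\in\mathbb{D}}|g(z)|$. Thus the modulus of the analytic function $g$ attains its maximum at the interior point $z_0$, and the maximum modulus principle forces $g$ to be constant, contradicting the hypothesis. This contradiction shows that $S_g$ cannot attain its norm on $\mathcal{B}_0$.

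I expect the only delicate point to be justifying that the supremum is genuinely \emph{attained} in the interior rather than merely approached; the membership $f\in\mathcal{B}_0$ (as opposed to $f\in\mathcal{B}$) is exactly what rules out the supremum escaping to the boundary, and this is where the distinction between $\mathcal{B}$ and $\mathcal{B}_0$ is essential. Everything after that is a one-line application of the maximum modulus principle.
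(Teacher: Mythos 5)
Your argument is correct and follows essentially the same route as the paper: use Lemma 1 to identify $\|S_g\|=\sup_{z\in\mathbb{D}}|g(z)|$, use the $\mathcal{B}_0$ decay condition to force the supremum of $(1-|z|^2)|f'(z)||g(z)|$ to be attained at an interior point, and then invoke the maximum modulus principle. In fact you fill in the compactness and equality-of-factors details that the paper's proof leaves implicit, so no changes are needed.
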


\begin{proof}Suppose that $S_{g}$ is norm-attaining on $\mathcal{B}_{0}$,
then there exists an extremal function $f\in\mathcal{B}_{0}$ with
$\|f\|_{\mathcal B}=1$ satisfying
$\|S_g\|=\|S_gf\|_{\mathcal{B}}$. By Lemma 1, we have
$$\sup_{z\in \mathbb{D}}|g(z)|=\|S_{g}\|=\|S_{g}f\|_{\mathcal{B}}
=\sup_{z\in \mathbb{D}}(1-|z|^{2})|f'(z)|\cdot|g(z)|\le\sup_{z\in \mathbb{D}}|g(z)|.$$
By definition, $f$ belongs to $\mathcal{B}_0$ implies that
$$\lim_{|z|\rightarrow 1^{-}}(1-|z|^{2})|f'(z)|=0\ .$$
Hence $g$ must attains its maximal modulus at some point $z_{0}\in \mathbb{D}$, which contradicts with Maximal Modulus Theorem since $g$ is not constant by assumption.
\end{proof}

\begin{theorem}\label{a} For any $g\in H^{\infty}$, the integral operator $S_{g}$ attains its norm on Bloch space $\mathcal{B}\,$. Furthermore, a function $f\in \mathcal{B}$ with $\|f\|_{\mathcal{B}}=1$ is extremal for the norm of $S_g$ if and only if there exists a sequence $\{z_{n}\}\subset \mathbb{D}$ with $z_{n}\rightarrow \partial \mathbb{D}$ such that both conditions
$$\lim_{n\rightarrow \infty}|g(z_{n})|=\sup_{z\in \mathbb{D}}|g(z)|\ \ \mbox{and}\ \ \lim_{n\rightarrow\infty}(1-|z_{n}|^{2})|f'(z_{n})|=1$$
are satisfied.
\end{theorem}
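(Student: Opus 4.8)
The plan is to reduce the whole question to the Bloch seminorm, to produce an explicit extremal function from a thin Blaschke product supported where $|g|$ is nearly maximal, and then to read off the stated characterization. First I would record the basic identity. Since $S_gf(0)=\int_0^0 f'g\,dw=0$ and $(S_gf)'=f'g$, for every $f\in\mathcal B$ we have
$$\|S_gf\|_{\mathcal B}=\sup_{z\in\mathbb D}(1-|z|^2)|f'(z)||g(z)|,$$
and by Lemma 1, $\|S_g\|=a$, where $a:=\sup_{z\in\mathbb D}|g(z)|$. Hence a function $f$ with $\|f\|_{\mathcal B}=1$ is extremal if and only if the supremum on the right equals $a$; this is the identity I will verify in both directions, and it is also precisely what the existence argument must achieve.

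For norm-attainment, choose $w_n\in\mathbb D$ with $|g(w_n)|\to a$. Because $g$ is non-constant, the maximum modulus theorem forbids any interior accumulation point of such a maximizing sequence, so $w_n\to\partial\mathbb D$. Applying Lemma A to $\{w_n\}$ yields a thin Blaschke product $B$ whose zeros $\{z_n\}$ form a subsequence of $\{w_n\}$; in particular $z_n\to\partial\mathbb D$ and $|g(z_n)|\to a$. By the computation recorded just before Lemma B, the Bloch seminorm of $B$ equals $1$ and $(1-|z_n|^2)|B'(z_n)|\to1$. Setting $f:=B-B(0)$ gives $f(0)=0$ and $f'=B'$, hence $\|f\|_{\mathcal B}=1$; moreover
$$(1-|z_n|^2)|f'(z_n)||g(z_n)|\longrightarrow 1\cdot a=a,$$
while $(1-|z|^2)|f'(z)||g(z)|\le a$ for all $z$. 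Therefore $\|S_gf\|_{\mathcal B}=a=\|S_g\|$, so $S_g$ attains its norm, and this particular $f$ already witnesses both limit conditions with the sequence $\{z_n\}$.

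It remains to prove the equivalence for an arbitrary unit-norm $f$. For the ``if'' direction, a sequence $\{z_n\}$ with $|g(z_n)|\to a$ and $(1-|z_n|^2)|f'(z_n)|\to1$ forces $(1-|z_n|^2)|f'(z_n)||g(z_n)|\to a$, so the supremum is at least $a$; since it never exceeds $a$, equality holds and $f$ is extremal. For the ``only if'' direction, extremality produces a sequence $w_n$ with $(1-|w_n|^2)|f'(w_n)||g(w_n)|\to a$. Writing $u_n=(1-|w_n|^2)|f'(w_n)|\in[0,1]$ and $v_n=|g(w_n)|\in[0,a]$, the inequalities $u_nv_n\le v_n\le a$ and $u_nv_n\le a\,u_n\le a$ squeeze both factors, giving $v_n\to a$ and $u_n\to1$. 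Finally, $g$ non-constant together with the maximum modulus theorem prevents $\{w_n\}$ from having an interior accumulation point, where $|g|$ would attain the value $a$; thus $w_n\to\partial\mathbb D$, and $\{w_n\}$ is the required sequence.

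I expect the main obstacle to be the existence half: one must coordinate the two limits, namely $|g|$ approaching its supremum and the normalized derivative approaching $1$, along a single sequence tending to the boundary. Thinness of the Blaschke product is exactly what guarantees $(1-|z_n|^2)|B'(z_n)|\to1$ at the prescribed zeros, and subtracting the constant $B(0)$ is the small but necessary device that restores unit Bloch norm without disturbing the derivative.
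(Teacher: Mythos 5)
Your proposal is correct and follows essentially the same route as the paper: reduce to the seminorm identity $\|S_gf\|_{\mathcal B}=\sup_z(1-|z|^2)|f'(z)||g(z)|$ together with Lemma 1, build the extremal function $B-B(0)$ from a thin Blaschke product (Lemma A) whose zeros sit where $|g|$ is nearly maximal, and verify the two limit conditions in both directions. Your squeeze argument ($u_nv_n\le v_n\le a$ and $u_nv_n\le au_n\le a$) in the ``only if'' direction actually supplies a detail the paper leaves implicit, namely why a maximizing sequence for the product forces \emph{each} factor to its supremum and why that sequence must tend to $\partial\mathbb D$.
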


\begin{proof}If $g$ is constant, it is trivial that $S_g$ is norm-attaining. Given any non-constant $g\in H^{\infty}$, we will construct an extremal function for the norm of $S_{g}$ on $\mathcal{B}$. Take a sequence $\{z_{n}\}$ in $\mathbb{D}$
with $\lim_{n\rightarrow \infty}|g(z_{n})|=\sup_{z\in\mathbb{D}}|g(z)|$.
Then $\lim_{n\rightarrow \infty}|z_n|=1$ by Maximal Modulus Theorem.
Using Lemma A, we obtain a thin Blaschke product $B(z)$ whose zeros
is a subsequence of  $\{z_{n}\}$, which is denoted also by $\{z_n\}$ for convenience.
Let $h(z)=B(z)-B(0)$, then $\|h\|_{\mathcal{B}}=1$.
Keep in mind that
$$\lim_{n\rightarrow\infty}(1-|z_{n}|^{2})|B'(z_{n})|=1 \mbox{ and } \lim_{n\rightarrow \infty}|g(z_{n})|=\sup_{z\in\mathbb{D}}|g(z)|\ .$$
We have
\begin{equation}
\begin{split}\nonumber
\|S_{g}\|\ge\|S_{g}h\|_{\mathcal{B}} &= \sup_{z\in
\mathbb{D}}(1-|z|^{2})|B'(z)|\cdot|g(z)|
\\&\geq \lim_{n\rightarrow\infty}(1-|z_{n}|^{2})|B'(z_{n})|\cdot|g(z_{n})|
\\&= \sup_{z\in \mathbb{D}}|g(z)|=\|S_{g}\| \ ,
\end{split}
\end{equation}
where the Lemma 1 is used. So the function $h$ is an extremal
function for the norm of $S_{g}\,$.

Now, if $f$ is an extremal function for the norm of $S_{g}$, then
\begin{equation}
\begin{split}\nonumber
\|S_{g}\| &= \|S_{g}f\|_{\mathcal{B}}
\\&= \sup_{z\in \mathbb{D}}(1-|z|^{2})|f'(z)|\cdot|g(z)|
 \\&\leq \sup_{z\in \mathbb{D}}|g(z)|=\|S_{g}\|.
 \end{split}
 \end{equation}
Since $f\in\mathcal B$ and $\|f\|_{\mathcal B}=1$, there
exists a sequence $\{z_{n}\}$ with $z_{n}\rightarrow \partial
\mathbb{D}$ such that
$$\lim_{n\rightarrow \infty}|g(z_{n})|=\sup_{z\in \mathbb{D}}|g(z)|\ \ \mbox{and}\ \ \lim_{n\rightarrow\infty}(1-|z_{n}|^{2})|f'(z_{n})|=1.\eqno{*}$$

Conversely, suppose that there is a sequence $\{z_{n}\}$ and a unit norm function $f$
such that (*) holds, then
\begin{equation}
\begin{split}\nonumber
\|S_{g}\| \ge\|S_{g}f\|_{\mathcal{B}} &= \sup_{z\in
\mathbb{D}}(1-|z|^{2})|f'(z)|\cdot|g(z)|
\\&\geq \lim_{n\rightarrow\infty}(1-|z_{n}|^{2})|f'(z_{n})|\cdot|g(z_{n})|
 \\&= \sup_{z\in \mathbb{D}}|g(z)|=\|S_{g}\| \ .
 \end{split}
 \end{equation}
So $f$ is an extremal function for the norm of $S_g$.
\end{proof}

\section{\bf Norm attaining integral operators on the Dirichlet space}
In this section, we discuss norm attaining integral operators on the
Dirichlet space. Some related results about another integral operator from the Bergman space to the Dirichlet space are also
obtained. At first, let us review definitions of the Dirichlet space and the Bergman
space.

Denote $dA(z)=\frac{1}{\pi}dxdy$. The Dirichlet space $\mathcal{D}$ consisting of analytic functions
$f\in H(\mathbb D)$ with
$$\int_{\mathbb{D}}|f'(z)|^{2}dA(z)<\infty .$$
The norm of $f\in\mathcal D$ is defined as
$$\|f\|_{\mathcal{D}}=\Big(|f(0)|^{2}+\int_{\mathbb{D}}|f'(z)|^{2}dA(z)\Big)^{1/2}\ .$$
The Bergman space $\mathcal A^{2}$ consisting of analytic functions $f\in H(\mathbb D)$ with
$$ \int_{\mathbb{D}}|f(z)|^{2}dA(z)<\infty .$$
The norm of $f\in\mathcal A^{2}$ is defined as
$$\|f\|_{\mathcal A^{2}}=\Big(\int_{\mathbb{D}}|f(z)|^{2}dA(z)\Big)^{1/2}\ .$$
$\mathcal D$ and $\mathcal A^2$ are Hilbert spaces with
$$\langle f, g\rangle =f(0)\overline{g(0)}+\int_{\mathbb{D}}f'(z)\overline{g'(z)}dA(z)$$
and
$$\langle f, g\rangle =\int_{\mathbb{D}}f(z)\overline{g(z)}dA(z)$$
respectively.

\begin{lemma}
Let $f$ be an analytic function in $\mathbb{D}$ and $0<r<1$. Then
$$|f(0)|^{2}\leq\frac{1}{2\pi}\int_{0}^{2\pi}|f(re^{i\theta})|^{2}d\theta\mbox{ and } |f(0)|^{2}\leq \int_{\mathbb{D}}|f(z)|^{2}dA(z)\ .$$
\end{lemma}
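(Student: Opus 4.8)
The plan is to prove two inequalities, both of which follow from the mean value property of analytic functions. For the first inequality, I would expand $f$ in its Taylor series $f(z)=\sum_{n=0}^{\infty}a_n z^n$, so that $f(0)=a_0$. On the circle of radius $r$, orthogonality of the functions $e^{in\theta}$ yields
\begin{equation}\nonumber
\frac{1}{2\pi}\int_{0}^{2\pi}|f(re^{i\theta})|^{2}\,d\theta=\sum_{n=0}^{\infty}|a_n|^{2}r^{2n}.
\end{equation}
Since every term is nonnegative and the $n=0$ term equals $|a_0|^2=|f(0)|^2$, the whole sum dominates $|f(0)|^2$, giving the first bound. Alternatively, and more cleanly, I would invoke the mean value property directly: $f(0)=\frac{1}{2\pi}\int_{0}^{2\pi}f(re^{i\theta})\,d\theta$, so by the Cauchy--Schwarz inequality (or Jensen's inequality applied to the convex function $t\mapsto t^2$),
\begin{equation}\nonumber
|f(0)|^{2}=\Big|\frac{1}{2\pi}\int_{0}^{2\pi}f(re^{i\theta})\,d\theta\Big|^{2}\le\frac{1}{2\pi}\int_{0}^{2\pi}|f(re^{i\theta})|^{2}\,d\theta.
\end{equation}

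For the second inequality, I would integrate the first one over the radial variable. Writing the area integral in polar coordinates with the normalization $dA(z)=\frac{1}{\pi}r\,dr\,d\theta$, I get
\begin{equation}\nonumber
\int_{\mathbb{D}}|f(z)|^{2}\,dA(z)=\frac{1}{\pi}\int_{0}^{1}\int_{0}^{2\pi}|f(re^{i\theta})|^{2}\,d\theta\, r\,dr=\int_{0}^{1}\Big(\frac{1}{2\pi}\int_{0}^{2\pi}|f(re^{i\theta})|^{2}\,d\theta\Big)2r\,dr.
\end{equation}
Applying the first inequality to the inner average bounds each inner term below by $|f(0)|^2$, and since $\int_{0}^{1}2r\,dr=1$, the outer integral is bounded below by $|f(0)|^2$. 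This establishes the second bound.

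I expect no serious obstacle here; the only point requiring a little care is the bookkeeping of the normalization constant in $dA$, to confirm that $\int_{\mathbb{D}}dA(z)=1$ so that the weight $2r\,dr$ integrates to exactly $1$ and no stray factor appears. Both inequalities are elementary consequences of the sub-mean-value property of $|f|^2$ (which is subharmonic), and the Taylor-coefficient computation makes the nonnegativity of the discarded terms completely transparent. I would present the coefficient argument as the primary proof since it simultaneously clarifies both statements.
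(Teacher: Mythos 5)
Your proposal is correct and follows essentially the same route as the paper: expand $f$ in its Taylor series, use orthogonality of $e^{in\theta}$ to reduce the circle average of $|f|^{2}$ to $\sum_{n}|a_{n}|^{2}r^{2n}\ge|a_{0}|^{2}$, and then integrate in $r$ (with the normalization $dA=\tfrac{1}{\pi}r\,dr\,d\theta$) to get the area version. The paper compresses this to ``direct computation by Taylor coefficients,'' so your write-up simply makes the same argument explicit.
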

\begin{proof}Let $f(z)=\sum_{k=0}^{\infty}a_kz^k$ be the Taylor series of $f$, then
$$|f(z)|^2=|f(re^{i\theta})|^2=f(re^{i\theta})\overline{f(re^{i\theta})}=\sum_{j=0,k=0}^{\infty}a_j\overline{a_k}r^{j+k}e^{i(j-k)\theta}\ .$$
Direct computation by Taylor coefficients gives the result.
\end{proof}

The more general form of above lemma can be found in Zhu's book
\cite{zhu}. In \cite{AA1}, Austin characterize the boundness of
integral operators $S_{g}$ on the Dirichlet space, and obtaied
following result.

\noindent {\bf Lemma C}. An integral operator $S_g$ is bounded on the Dirichlet space $\mathcal{D}$
if and only if $g$ belongs to $H^{\infty}$.

Now we can give the norm of $S_{g}$ on $\mathcal{D}$.
\begin{lemma}Let $S_{g}$ be a bounded integral operator on $\mathcal{D}$, then
$$\|S_{g}\|=\sup_{z\in \mathbb{D}}|g(z)|\ .$$
\end{lemma}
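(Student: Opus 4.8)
The plan is to reduce the norm computation to an area integral over $\mathbb{D}$ and then exploit the conformal invariance of $dA$ together with Lemma 2. First I would record that, since $(S_g f)'(z) = f'(z)g(z)$ and $(S_g f)(0)=0$, the Dirichlet norm of the image is
$$\|S_g f\|_{\mathcal{D}}^2 = \int_{\mathbb{D}} |f'(z)|^2 |g(z)|^2 \, dA(z).$$
By Lemma C the symbol $g$ lies in $H^{\infty}$, so writing $a=\sup_{z\in\mathbb{D}}|g(z)|$ the pointwise bound $|g(z)|^2\le a^2$ gives
$$\|S_g f\|_{\mathcal{D}}^2 \le a^2 \int_{\mathbb{D}}|f'(z)|^2\, dA(z) \le a^2 \|f\|_{\mathcal{D}}^2,$$
whence $\|S_g\|\le a$. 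It then remains to produce unit vectors on which $\|S_g f\|_{\mathcal{D}}$ approaches $a$.

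For the reverse inequality I would reuse the M\"obius test functions from Section 2. Fix $\epsilon>0$, choose $z_0\in\mathbb{D}$ with $|g(z_0)|\ge a-\epsilon$, and set $\varphi_{z_0}(z)=\frac{z_0-z}{1-\overline{z_0}z}$ together with $f_{z_0}(z)=\varphi_{z_0}(z)-z_0$. Then $f_{z_0}(0)=0$ and $f_{z_0}'=\varphi_{z_0}'$. Because $\varphi_{z_0}$ is a conformal involution of $\mathbb{D}$ with $\varphi_{z_0}(0)=z_0$ and $dA$ is the normalized area measure, the change of variables $w=\varphi_{z_0}(z)$, whose real Jacobian is $|\varphi_{z_0}'(z)|^2$, yields $\int_{\mathbb{D}}|\varphi_{z_0}'|^2\, dA=\int_{\mathbb{D}}dA=1$, so $\|f_{z_0}\|_{\mathcal{D}}=1$. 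The same substitution transforms the image norm into
$$\|S_g f_{z_0}\|_{\mathcal{D}}^2 = \int_{\mathbb{D}}|g(z)|^2|\varphi_{z_0}'(z)|^2\, dA(z) = \int_{\mathbb{D}}|g(\varphi_{z_0}(w))|^2\, dA(w) = \|g\circ\varphi_{z_0}\|_{\mathcal{A}^2}^2.$$

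The key step is then to apply the area-integral inequality of Lemma 2 to the analytic function $g\circ\varphi_{z_0}$, evaluated at the origin. Since $\varphi_{z_0}(0)=z_0$, this gives
$$|g(z_0)|^2 = |(g\circ\varphi_{z_0})(0)|^2 \le \int_{\mathbb{D}}|g(\varphi_{z_0}(w))|^2\, dA(w) = \|S_g f_{z_0}\|_{\mathcal{D}}^2,$$
so that $\|S_g\|\ge\|S_g f_{z_0}\|_{\mathcal{D}}\ge|g(z_0)|\ge a-\epsilon$. Letting $\epsilon\to 0$ closes the gap and establishes $\|S_g\|=a$. I expect the only delicate point to be the change of variables: one must confirm that $\varphi_{z_0}$ is an involution sending $0$ to $z_0$, that its complex Jacobian is exactly $|\varphi_{z_0}'|^2$, and that the normalization of $dA$ makes $\int_{\mathbb{D}}dA=1$, so that both the unit-norm claim for $f_{z_0}$ and the identification of the image norm with $\|g\circ\varphi_{z_0}\|_{\mathcal{A}^2}^2$ come out cleanly; everything else is routine.
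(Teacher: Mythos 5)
Your proposal is correct and follows essentially the same route as the paper: the same pointwise upper bound, the same M\"obius test functions $f_{z_0}=\sigma_{z_0}-z_0$, the same change of variables identifying $\|S_g f_{z_0}\|_{\mathcal D}^2$ with $\int_{\mathbb D}|g\circ\sigma_{z_0}|^2\,dA$, and the same appeal to Lemma 2 at the origin. The only difference is that you spell out the involution and Jacobian details more explicitly than the paper does.
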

\begin{proof}Since $S_{g}$ is bounded on $\mathcal{D}$, we can denote $b=\sup_{z\in \mathbb{D}}|g(z)|$ by the Lemma C. For any
$\epsilon>0$, there exists a point $a\in D$ such that
$|g(a)|>b-\epsilon$. Consider the function
$$f_a(z)=\sigma_{a}(z)-a=\frac{a-z}{1-\overline{a}z}-a\ ,$$
then $\|f_a\|_{\mathcal{D}}=1$. By the Lemma 2, we have
\begin{equation}
\begin{split}\nonumber
\|S_{g}\| &\geq \|S_{g}f_a\|_{\mathcal{D}}
\\&=\Big(\int_{\mathbb{D}}|g(z)|^{2}|\sigma_{a}'(z)|^{2}dA(z)\Big)^{1/2}
\\&=\Big(\int_{\mathbb{D}}|g(\sigma_{a}(z))|^{2}dA(z)\Big)^{1/2}
\\&\geq |g(\sigma_a(0))|=|g(a)|>b-\epsilon\ .
\end{split}
\end{equation}
Hence $\|S_g\|\ge\sup_{z\in\mathbb D}|g(z)|$.

The reverse inequality can be obtained by
$$\|S_{g}f\|_{\mathcal D}=\Big(\int_{\mathbb{D}}|f'(z)g(z)|^{2}dA(z)\Big)^{1/2}\leq \|f\|_{\mathcal{D}}\cdot\sup_{z\in\mathbb{D}}|g(z)|$$
for any $f\in\mathcal D$.
\end{proof}
Using above Lemma 3, we obtain main result of this section.
\begin{theorem}
Let $g$ be a non-constant function in $H^{\infty}$. Then the integral operator $S_{g}$ can not attain its norm on the Dirichlet space $\mathcal{D}$.
\end{theorem}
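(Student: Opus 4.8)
The plan is to argue by contradiction, exploiting the strict inequality that the Maximal Modulus Theorem forces on a non-constant $g$. By Lemma 3 we already know $\|S_{g}\|=b$, where $b:=\sup_{z\in\mathbb{D}}|g(z)|$; note that $b>0$ since a non-constant $g$ is not identically zero. Suppose, for contradiction, that some $f\in\mathcal{D}$ with $\|f\|_{\mathcal{D}}=1$ were extremal, so that $\|S_{g}f\|_{\mathcal{D}}=b$.

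First I would write out the relevant quantities explicitly. Because $(S_{g}f)(0)=0$, the extremality hypothesis reads $\|S_{g}f\|_{\mathcal{D}}^{2}=\int_{\mathbb{D}}|f'(z)|^{2}|g(z)|^{2}\,dA(z)=b^{2}$, while the normalization is $|f(0)|^{2}+\int_{\mathbb{D}}|f'(z)|^{2}\,dA(z)=1$. The heart of the matter is then to compare the weighted energy $\int_{\mathbb{D}}|f'|^{2}|g|^{2}\,dA$ with $b^{2}\int_{\mathbb{D}}|f'|^{2}\,dA$, the former being at most the latter since $|g|\le b$.

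The key observation is that, since $g$ is non-constant and $|g|\le b$ on $\mathbb{D}$, the Maximal Modulus Theorem yields the \emph{strict} pointwise bound $|g(z)|<b$ for every $z\in\mathbb{D}$: equality at any interior point would force $g$ to be constant. Hence $b^{2}-|g(z)|^{2}>0$ throughout $\mathbb{D}$. If $f$ is non-constant, its derivative $f'$ is a non-zero analytic function, so its zero set has zero area measure; consequently $|f'(z)|^{2}\bigl(b^{2}-|g(z)|^{2}\bigr)$ is strictly positive on a set of positive measure, giving $\int_{\mathbb{D}}|f'|^{2}\bigl(b^{2}-|g|^{2}\bigr)\,dA>0$. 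This produces $\int_{\mathbb{D}}|f'|^{2}|g|^{2}\,dA<b^{2}\int_{\mathbb{D}}|f'|^{2}\,dA\le b^{2}\|f\|_{\mathcal{D}}^{2}=b^{2}$, so $\|S_{g}f\|_{\mathcal{D}}<b$, contradicting extremality. If instead $f$ is constant, then $f'\equiv 0$ and $\|S_{g}f\|_{\mathcal{D}}=0<b$, again a contradiction.

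Thus no unit-norm extremal function can exist, and $S_{g}$ fails to attain its norm on $\mathcal{D}$. I expect the only delicate point to be the passage to the strict inequality inside the integral: one must first dispose of the constant-$f$ case by hand, and then invoke that the zero set of the analytic function $f'$ has zero area measure so that the strictly positive integrand $|f'|^{2}(b^{2}-|g|^{2})$ genuinely contributes a positive integral. Everything else is routine bookkeeping with the Dirichlet norm and Lemma 3.
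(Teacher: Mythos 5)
Your proof is correct and follows essentially the same route as the paper: argue by contradiction and use the Maximal Modulus Theorem to get the strict pointwise bound $|g(z)|<\sup_{w\in\mathbb{D}}|g(w)|$, which forces $\int_{\mathbb{D}}|f'|^{2}|g|^{2}\,dA$ strictly below $\|S_g\|^{2}$. You merely organize the bookkeeping a bit differently (handling the constant-$f$ case directly rather than first showing $f(0)=0$), and you supply the measure-zero justification for the strict integral inequality that the paper leaves implicit.
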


\begin{proof}
Assume that $S_g$ attains its norm and $f$ is an extremal function. Then it is
necessary that $f(0)=0$. Otherwise, we have
\begin{equation}
\begin{split}\nonumber
\|S_g\|&=\|S_{g}f\|_{\mathcal{D}} \\
&=\Big( \int_{\mathbb{D}} |f'(z)|^{2}|g(z)|^{2}dA(z)\Big)^{1/2}
\\&\leq\Big(\int_{\mathbb{D}}|f'(z)|^{2}dA(z)\Big)^{1/2}\cdot \sup_{z\in\mathbb{D}}|g(z)|
\\&<\Big(|f(0)|^{2}+\int_{\mathbb{D}}|f'(z)|^{2}dA(z)\Big)^{1/2}\cdot \sup_{z\in\mathbb{D}}|g(z)|
\\&= \sup_{z\in\mathbb{D}}|g(z)|=\|S_g\|\ ,
\end{split}
\end{equation}
where $\|f\|_{\mathcal D}=1$ and the Lemma 3 is used. Then
$$\int_{\mathbb D}|f'(z)|^{2}dA(z)=1$$
and
$$\|S_g\|^2=\|S_{g}f\|^{2}_{\mathcal{D}}=\int_{\mathbb{D}}|f'(z)|^{2}|g(z)|^{2}dA(z)\ .$$
But this is impossible since
$$\int_{\mathbb{D}}|f'(z)|^{2}\big(|g(z)|^{2}-\|S_g\|^2) dA(z)<0$$
for any non-constant analytic function $g$.
\end{proof}

Consider another integral operator
$T_{g}f(z)=\int_{0}^{z}g'(w)f(w)dw$. Following results can be
deduced similarly with the Lemma 3 and the Theorem 3. We omit proofs
of them.

\begin{lemma}If the integral operator $T_{g}$ is bounded from
$A^{2}$ to $\mathcal{D}$, then
$$\|T_{g}\|=\sup_{z\in \mathbb{D}}|g'(z)|\ .$$
\end{lemma}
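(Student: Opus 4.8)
The plan is to reduce the operator norm to a single weighted Bergman integral and then mirror the proof of Lemma 3 with $g$ replaced by $g'$ and the test function shifted by one derivative. First I would record the two structural facts that make $T_g$ transparent: since $T_g f(z)=\int_0^z g'(w)f(w)\,dw$, we have $(T_g f)(0)=0$ and $(T_g f)'(z)=g'(z)f(z)$. Substituting these into the Dirichlet norm annihilates the point-evaluation term and gives
$$\|T_g f\|_{\mathcal D}^2=\int_{\mathbb D}|g'(z)|^2|f(z)|^2\,dA(z).$$
The upper bound is then immediate: pulling $\sup_{z\in\mathbb D}|g'(z)|$ out of the integral yields $\|T_g f\|_{\mathcal D}\le\big(\sup_{z\in\mathbb D}|g'(z)|\big)\,\|f\|_{\mathcal A^2}$, whence $\|T_g\|\le\sup_{z\in\mathbb D}|g'(z)|$; boundedness of $T_g$ guarantees this supremum is finite.

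For the reverse inequality I would construct a unit-norm test function in $\mathcal A^2$ concentrated where $|g'|$ is nearly maximal. Set $c=\sup_{z\in\mathbb D}|g'(z)|$ and fix, for given $\epsilon>0$, a point $a\in\mathbb D$ with $|g'(a)|>c-\epsilon$. Because the domain is now $\mathcal A^2$ rather than $\mathcal D$, the right analogue of the test function $\sigma_a-a$ from Lemma 3 is its derivative $f=\sigma_a'$. A reproducing-kernel computation, $\int_{\mathbb D}|\sigma_a'(z)|^2\,dA(z)=(1-|a|^2)^2\int_{\mathbb D}|1-\overline{a}z|^{-4}\,dA(z)=1$, shows $\|f\|_{\mathcal A^2}=1$. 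Feeding $f$ into the displayed identity and changing variables through the involution $z=\sigma_a(w)$---so that $\sigma_a'(\sigma_a(w))\sigma_a'(w)=1$ forces the Jacobian $|\sigma_a'(w)|^2$ to cancel the weight---collapses the integral to
$$\|T_g f\|_{\mathcal D}^2=\int_{\mathbb D}|g'(\sigma_a(w))|^2\,dA(w)\ge|g'(a)|^2>(c-\epsilon)^2,$$
where the middle inequality is the second estimate of Lemma 2 applied to the analytic function $w\mapsto g'(\sigma_a(w))$, whose value at the origin is $g'(a)$. Letting $\epsilon\to0$ gives $\|T_g\|\ge\sup_{z\in\mathbb D}|g'(z)|$, and the two bounds coincide.

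I do not expect a genuinely hard step: the argument is a one-derivative translate of Lemma 3, and the two facts requiring verification---that $\sigma_a'$ has unit $\mathcal A^2$ norm, and that the involutive change of variables converts the weight $|\sigma_a'|^2$ into the reproducing property at $a$---are exactly the ingredients already used there for $g$, now applied to $g'$. The only point demanding a moment's care is matching the test function to the domain: since $T_g$ maps $\mathcal A^2$ (not $\mathcal D$) into $\mathcal D$, one tests against $\sigma_a'$ rather than $\sigma_a-a$, and it is precisely the vanishing of $(T_g f)(0)$ that lets the Bergman and Dirichlet normalizations align.
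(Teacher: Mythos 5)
Your proof is correct and follows essentially the same route the paper indicates: it omits the proof and simply says to rerun Lemma~3 with the test function $F_a(z)=\frac{1-|a|^2}{(1-\overline{a}z)^2}$, which is exactly $-\sigma_a'$, the function you use. Your computation of $\|(T_gf)\|_{\mathcal D}$, the unit $\mathcal A^2$-norm of $\sigma_a'$, the involutive change of variables, and the appeal to Lemma~2 all match the intended argument.
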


This lemma can be proved by replacing $f_a$ with
$$F_a(z)=\frac{1-|a|^{2}}{(1-\overline{a}z)^{2}},\ a\in\mathbb D$$
in the proof of the Lemma 3.

\begin{theorem}
Let $T_{g}$ be a bounded operator from $A^{2}$ to $\mathcal{D}$.
Then $T_{g}$ is norm attaining if and only if $g(z)=az+b$, where $a,
b\in \mathbb{C}$.
\end{theorem}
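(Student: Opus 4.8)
The plan is to collapse the whole problem onto a single nonnegative integrand and then apply the maximum modulus principle to $g'$, in close analogy with the proof of Theorem 3. First I would record the two ingredients I need. Since $T_g f(z)=\int_0^z g'(w)f(w)\,dw$, we have $T_g f(0)=0$ and $(T_g f)'(z)=g'(z)f(z)$, so the boundary term in the Dirichlet norm drops out and
\begin{equation}\label{eq:Tgnorm}
\|T_g f\|_{\mathcal D}^2=\int_{\mathbb D}|g'(z)|^2|f(z)|^2\,dA(z).
\end{equation}
By Lemma 4, writing $M:=\sup_{z\in\mathbb D}|g'(z)|=\|T_g\|$, the entire question reduces to whether the right-hand side of \eqref{eq:Tgnorm} can equal $M^2$ for some unit-norm $f\in A^2$. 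The key object is therefore the defect integral $\int_{\mathbb D}\big(M^2-|g'(z)|^2\big)|f(z)|^2\,dA(z)$, whose integrand is pointwise nonnegative because $M$ is the supremum of $|g'|$.

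For the sufficiency, suppose $g(z)=az+b$. Then $g'\equiv a$, so $M=|a|$ and \eqref{eq:Tgnorm} simplifies to $\|T_g f\|_{\mathcal D}^2=|a|^2\|f\|_{A^2}^2$. Consequently $\|T_g f\|_{\mathcal D}=M$ for \emph{every} unit-norm $f\in A^2$; in particular the constant function $f\equiv 1$, which satisfies $\|1\|_{A^2}=1$ under the normalization $dA=\frac1\pi\,dx\,dy$, is extremal. Thus $T_g$ attains its norm.

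For the necessity I would argue by contraposition: assume $g$ is not of the form $az+b$, equivalently that $g'$ is non-constant, and suppose toward a contradiction that $T_g$ is norm-attaining at some $f$ with $\|f\|_{A^2}=1$. Combining \eqref{eq:Tgnorm} with $\|T_g f\|_{\mathcal D}=M$ and $\int_{\mathbb D}|f|^2\,dA=1$ yields
\begin{equation}\label{eq:Tgdefect}
\int_{\mathbb D}\big(M^2-|g'(z)|^2\big)|f(z)|^2\,dA(z)=0.
\end{equation}
Since the integrand is nonnegative it must vanish almost everywhere, so $|g'(z)|=M$ at almost every point where $f(z)\neq 0$. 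But $g'$ is a non-constant bounded analytic function, and by the maximum modulus theorem it cannot attain its supremum $M$ at any interior point; hence $|g'(z)|<M$ for all $z\in\mathbb D$, making the first factor strictly positive throughout $\mathbb D$. Then \eqref{eq:Tgdefect} forces $f=0$ almost everywhere, contradicting $\|f\|_{A^2}=1$. Therefore $T_g$ fails to be norm-attaining whenever $g$ is non-linear, which completes the equivalence.

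The one step that deserves care is the passage from the vanishing of the defect integral to the strict inequality $|g'(z)|<M$ on all of $\mathbb D$. Because $M$ is only a supremum and need not be attained, I must rule out interior points with $|g'(z)|=M$ rather than merely reason about a maximizer; the maximum modulus theorem does exactly this, converting non-constancy of $g'$ into the strict inequality. Everything else parallels the proof of Theorem 3, the only structural difference being that the boundary term $|T_g f(0)|^2$ in the Dirichlet norm is automatically zero here, since $T_g f(0)=0$ for every $f$, so no preliminary reduction is required.
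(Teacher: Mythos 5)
Your proof is correct and follows exactly the route the paper intends: the paper omits the proof of this theorem, stating only that it is deduced ``similarly with the Lemma 3 and the Theorem 3,'' and your defect-integral argument combined with the maximum modulus principle applied to $g'$ is precisely the analogue of the paper's Theorem 3 argument, with the simplification (which you correctly note) that $T_gf(0)=0$ makes the preliminary reduction $f(0)=0$ unnecessary. The sufficiency direction for linear $g$ is also handled correctly.
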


\section{\bf Norm attaining integral operators on $BMOA$}
The analytic Hardy space $H^{2}$ on the unit disc $\mathbb{D}$ consists of all analytic functions $f\in H(\mathbb D)$ satisfying
$$\|f\|_{H^{2}}=\Big(\sup_{0<r<1}\frac{1}{2\pi}\int_{0}^{2\pi}|f(re^{i\theta})|^{2}d\theta\Big)^{1/2}<\infty\ .$$
For $a\in \mathbb{D}$, denote
$$\sigma_{a}(z)=\frac{a-z}{1-\overline{a}z},\ \ z\in \mathbb{D} \ .$$
Let $BMOA$ denote the space of all analytic functions $f\in
H^{2}$ whose boundary function have bounded mean oscillation. The norm of $f\in BMOA$ can be defined as
$$\|f\|_{BMOA}=|f(0)|+\sup_{a\in \mathbb{D}}\|f\circ\sigma_{a}-f(a)\|_{H^{2}}.$$
There are some other equivalent norms for $BMOA$, see \cite{GD}, for example.
Next is the well-known Littlewood-Paley Identity, whose proof can be found in many books.

\noindent\textbf{Lemma D.} Let $f$ be analytic on unit disc $\mathbb{D}$. Then
$$\|f\|_{H^{2}}^{2}=|f(0)|^2+2\int_{\mathbb{D}}|f'(z)|^{2}\log\frac{1}{|z|}dA(z)\ .$$
By Littlewood-Paley Identity, the norm of $BMOA$ can be expressed as
$$\|f\|_{BMOA}=|f(0)|+\sup_{a\in\mathbb D}\Big\{ \int_{\mathbb{D}}2|f'(z)|^{2}\log\frac{1}{|\sigma_{a}(z)|}dA(z) \Big\}^{1/2}\ .$$

The following lemma is our main tool to compute the norm of integral operators.

\begin{lemma}If $f$ is analytic in $\mathbb{D}$, then
$$|f(0)|^{2}\leq 2\int_{\mathbb{D}}|f(z)|^{2}\log\frac{1}{|z|}dA(z)\ .$$
\end{lemma}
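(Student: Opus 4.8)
The plan is to mimic the Taylor-coefficient computation used in the proof of Lemma 2, now weighted by the logarithmic factor $\log\frac{1}{|z|}$. First I would write $f(z)=\sum_{k=0}^{\infty}a_{k}z^{k}$, so that $|f(0)|^{2}=|a_{0}|^{2}$, and pass to polar coordinates $z=re^{i\theta}$, in which $dA(z)=\frac{1}{\pi}r\,dr\,d\theta$. Integrating $|f(re^{i\theta})|^{2}=\sum_{j,k}a_{j}\overline{a_{k}}r^{j+k}e^{i(j-k)\theta}$ over $\theta$ and using the orthogonality relation $\int_{0}^{2\pi}e^{i(j-k)\theta}\,d\theta=2\pi\delta_{jk}$ collapses the double sum to $\frac{1}{2\pi}\int_{0}^{2\pi}|f(re^{i\theta})|^{2}\,d\theta=\sum_{k}|a_{k}|^{2}r^{2k}$.

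Next I would carry out the radial integration. After the angular step the left-hand side of the desired formula becomes
$$2\int_{\mathbb{D}}|f(z)|^{2}\log\frac{1}{|z|}\,dA(z)=4\sum_{k=0}^{\infty}|a_{k}|^{2}\int_{0}^{1}r^{2k+1}\log\frac{1}{r}\,dr.$$
The elementary evaluation $\int_{0}^{1}r^{m}\log\frac{1}{r}\,dr=\frac{1}{(m+1)^{2}}$ (integration by parts, or differentiating $\int_{0}^{1}r^{m}\,dr=\frac{1}{m+1}$ in $m$) with $m=2k+1$ gives $\int_{0}^{1}r^{2k+1}\log\frac{1}{r}\,dr=\frac{1}{4(k+1)^{2}}$, so the whole expression simplifies to $\sum_{k=0}^{\infty}\frac{|a_{k}|^{2}}{(k+1)^{2}}$.

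Finally, I would simply isolate the $k=0$ term: since $\frac{|a_{0}|^{2}}{(0+1)^{2}}=|a_{0}|^{2}=|f(0)|^{2}$ and every remaining summand is non-negative,
$$2\int_{\mathbb{D}}|f(z)|^{2}\log\frac{1}{|z|}\,dA(z)=\sum_{k=0}^{\infty}\frac{|a_{k}|^{2}}{(k+1)^{2}}\ge|f(0)|^{2},$$
which is the claimed inequality.

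The only point requiring any care — and what I would flag as the main obstacle, though it is a mild one — is the interchange of summation and integration that produces the term-by-term formula. This is harmless here because all summands $|a_{k}|^{2}r^{2k}\log\frac{1}{r}$ are non-negative on $(0,1)$, so Tonelli's theorem (equivalently, monotone convergence applied to the partial sums) justifies the exchange unconditionally: if the resulting series diverges the asserted inequality is vacuously true, and if it converges the computation above is valid. Thus no integrability hypothesis on $f$ beyond analyticity is needed, and the proof is essentially the same ``direct computation by Taylor coefficients'' invoked for Lemma 2.
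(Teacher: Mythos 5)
Your proof is correct and follows the same route the paper intends: the paper's proof of this lemma simply says it is ``similar to that of the Lemma 2,'' i.e.\ a direct computation with Taylor coefficients, which is exactly what you carry out (arriving at the identity $2\int_{\mathbb{D}}|f|^{2}\log\frac{1}{|z|}\,dA=\sum_{k\ge 0}|a_{k}|^{2}/(k+1)^{2}$ and dropping the terms with $k\ge 1$). Your explicit justification of the term-by-term integration via Tonelli is a welcome addition that the paper leaves implicit.
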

\begin{proof}The proof is similar to that of the Lemma 2.
\end{proof}

In \cite{XJ}, Xiao showed that $S_{g}$ is bounded on $BMOA$ if and
only if $g\in H^{\infty}$. Now we give the norm of $S_g$ on $BMOA$.
\begin{lemma}If $S_g$ is a bounded integral operator on $BMOA$, then
$$\|S_{g}\|=\sup_{z\in\mathbb{D}}|g(z)|\ .$$
\end{lemma}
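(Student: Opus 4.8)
The plan is to mirror the structure of Lemma 1 and Lemma 3: establish the upper bound $\|S_g\| \le \sup_{z\in\mathbb{D}}|g(z)|$ by a direct estimate on the $BMOA$ seminorm, and then recover the matching lower bound by testing $S_g$ against a well-chosen family of unit-norm functions that concentrate the relevant integral near a point where $|g|$ is nearly maximal.

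For the upper bound, write $b = \sup_{z\in\mathbb{D}}|g(z)|$, which is finite by Xiao's boundedness criterion. Using the Littlewood--Paley expression for the $BMOA$ norm, I would observe that for any $f$ with $\|f\|_{BMOA}=1$ the seminorm of $S_g f$ is
\begin{equation}\nonumber
\sup_{a\in\mathbb{D}}\Big\{\int_{\mathbb{D}}2|f'(z)|^{2}|g(z)|^{2}\log\frac{1}{|\sigma_a(z)|}dA(z)\Big\}^{1/2}
\le b\cdot\sup_{a\in\mathbb{D}}\Big\{\int_{\mathbb{D}}2|f'(z)|^{2}\log\frac{1}{|\sigma_a(z)|}dA(z)\Big\}^{1/2},
\end{equation}
since $(S_g f)' = f'g$ and $|g(z)|^2 \le b^2$ pointwise. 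Because $S_g f$ vanishes at $0$, the first term $|S_g f(0)|$ of the $BMOA$ norm is zero, so the whole norm equals the seminorm and is bounded by $b\,\|f\|_{BMOA} = b$. This gives $\|S_g\| \le b$ with no trouble at all.

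The reverse inequality is the substantive part. Given $\epsilon>0$, choose $a\in\mathbb{D}$ with $|g(a)|>b-\epsilon$. The natural test function is the same Möbius-based choice $f_a(z)=\sigma_a(z)-a$ used in Lemma 3, which has unit norm in the relevant space and satisfies $f_a'=\sigma_a'$; I would verify that $\|f_a\|_{BMOA}=1$ and then, choosing the conformal parameter in the $BMOA$ seminorm to be $a$ itself, make the change of variables $w=\sigma_a(z)$. Under this substitution the weight $\log\frac{1}{|\sigma_a(z)|}$ becomes $\log\frac{1}{|w|}$ and $|f_a'(z)|^2\,dA(z)$ transforms so that the integral reduces to $2\int_{\mathbb{D}}|g(\sigma_a(w))|^2\log\frac{1}{|w|}dA(w)$; applying Lemma 5 to the analytic function $g\circ\sigma_a$ bounds this below by $|g(\sigma_a(0))|^2=|g(a)|^2>(b-\epsilon)^2$. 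Letting $\epsilon\to 0$ then yields $\|S_g\|\ge b$, completing the proof.

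The main obstacle is the change-of-variables bookkeeping in the lower bound: one must confirm that the Möbius substitution correctly turns the conformally-weighted integral into the clean form $2\int|g\circ\sigma_a|^2\log\frac{1}{|w|}dA$, and that Lemma 5 is exactly the tool that extracts the value at the origin, namely $|g(a)|$. This is precisely the $BMOA$ analogue of how Lemma 2 was used in Lemma 3, so the weighted sub-mean-value inequality of Lemma 5 is doing the same job that the ordinary area sub-mean-value inequality did there; the only real care needed is tracking the Jacobian and the self-inverse property $\sigma_a\circ\sigma_a=\mathrm{id}$ so that $\sigma_a(0)=a$.
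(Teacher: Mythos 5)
Your proposal is correct and follows essentially the same route as the paper: the same upper bound via the pointwise estimate $|g|\le\sup|g|$ in the Littlewood--Paley form of the norm, the same test function $\sigma_a-a$, the same Möbius change of variables reducing the weighted integral to $2\int_{\mathbb{D}}|g(\sigma_a(w))|^2\log\frac{1}{|w|}\,dA(w)$, and the same application of Lemma 5 to $g\circ\sigma_a$ to extract $|g(a)|$. The only cosmetic difference is that the paper performs the change of variables for a general conformal parameter before specializing it to $a$, whereas you specialize first; the argument is the same.
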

\begin{proof}Since $S_g$ is bounded on $BMOA$, $g$ is bounded by Xiao \cite{XJ}.
Let $\lambda=\sup_{z\in \mathbb{D}}|g(z)|$. For any $\epsilon>0$, there exists a
point $b\in\mathbb{D}$ such that $|g(b)|>\lambda-\epsilon$. Denote $f_b(z)=\sigma_{b}(z)-b$. Then
\begin{equation}
\begin{split}\nonumber
\|f_b\|_{BMOA}
= & \sup_{a\in\mathbb D}\Big\{ \int_{\mathbb{D}}2|\sigma_b'(z)|^{2}\log\frac{1}{|\sigma_{a}(z)|}dA(z)\Big\}^{1/2}\\
= & \sup_{a\in\mathbb D}\Big\{\int_{\mathbb{D}}2|[\sigma_b(\sigma_a(z))]'|^{2}\log\frac{1}{|z|}dA(z)\Big\}^{1/2}\\
= & \sup_{c\in\mathbb D}\Big\{\int_{\mathbb{D}}2|\sigma_c'(z)|^{2}\log\frac{1}{|z|}dA(z)\Big\}^{1/2}\\
= & \sup_{c\in\mathbb D}\Big\{(\|\sigma_c\|^2_{H^2}-|c|^2)\Big\}^{1/2}\\
= & \sup_{c\in\mathbb D}(1-|c|^2)^{1/2}=1
\end{split}
\end{equation}
by the Lemma D. Hence
\begin{equation}
\begin{split}\nonumber
\|S_{g}\|
& \geq \|S_{g}f_b\|_{BMOA}\\
& =\sup_{a\in\mathbb D}\Big(\int_{\mathbb{D}}2|f_b'(z)g(z)|^{2}\log\frac{1}{|\sigma_a(z)|}dA(z)\Big)^{1/2}\\
& =\sup_{a\in\mathbb D}\Big(\int_{\mathbb{D}}2|g(z)|^{2}|\sigma_b'(z)|^2\log\frac{1}{|\sigma_a(z)|}dA(z)\Big)^{1/2}\\
& =\sup_{a\in\mathbb D}\Big(\int_{\mathbb{D}}2|g(\sigma_b(z))|^2\log\frac{1}{|\sigma_a(\sigma_b(z))|}dA(z)\Big)^{1/2}\\
& \geq\Big(2\int_{\mathbb{D}}|g(\sigma_{b}(z))|^{2}\log \frac{1}{|z|}dA(z)\Big)^{1/2}\\
& \geq |g(b)|\geq \lambda-\epsilon\ ,
\end{split}
\end{equation}
where the Lemma 5 is used.

The reverse inequality can be followed from
$$\|S_{g}f\|_{BMOA}=\sup_{a\in\mathbb D}\Big(\int_{\mathbb{D}}2|f'g|^{2}g(z, a)dA(z)\Big)^{1/2}\leq \|f\|_{BMOA}\cdot
\sup_{z\in\mathbb{D}}|g(z)|\ .$$ The proof is completed.
\end{proof}

Next theorem is the main result of this section.
\begin{theorem}
For any $g\in H^{\infty}$, the integral operator $S_{g}$ is norm-attaining on $BMOA$.
\end{theorem}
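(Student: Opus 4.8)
The plan is to construct an explicit extremal function by the same thin-Blaschke-product device used for the Bloch space in Theorem~\ref{a}, with the pointwise Bloch estimate replaced by its integral $BMOA$ analogue, Lemma~5. The constant case is trivial (then $S_g f = g\,(f-f(0))$, so every unit-norm $f$ with $f(0)=0$ is extremal), so assume $g$ is non-constant and write $\lambda=\sup_{z\in\mathbb{D}}|g(z)|=\|S_g\|$ by Lemma~6. First I would pick $\{z_n\}\subset\mathbb{D}$ with $|g(z_n)|\to\lambda$; the Maximal Modulus Theorem forces $|z_n|\to1$, so Lemma~A yields a thin Blaschke product $B$ whose zeros form a subsequence, still called $\{z_n\}$. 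The candidate extremal function is $f=B-B(0)$, exactly as in the Bloch case.

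Next I would check $\|f\|_{BMOA}=1$. Since $f(0)=0$ it suffices to compute the seminorm. For the M\"obius change of variable $z=\sigma_a(w)$ the Littlewood--Paley identity (Lemma~D) gives $2\int_{\mathbb{D}}|B'(z)|^2\log\frac{1}{|\sigma_a(z)|}\,dA(z)=\|B\circ\sigma_a\|_{H^2}^2-|B(a)|^2=1-|B(a)|^2$, because $B\circ\sigma_a$ is again a Blaschke product and hence has unimodular boundary values and $H^2$-norm $1$. Taking the supremum over $a$ and using that $B$ vanishes at its zeros gives $\|f\|_{BMOA}=\sup_a(1-|B(a)|^2)^{1/2}=1$. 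Combined with $|g|\le\lambda$ and the norm expression from Lemma~6, this already gives the upper bound $\|S_g f\|_{BMOA}\le\lambda\|f\|_{BMOA}=\lambda$.

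It remains to prove the matching lower bound, which is the crux. Because $(S_gf)(0)=0$ and $(S_gf)'=B'g$, evaluating the defining supremum of $\|S_gf\|_{BMOA}$ at the single point $a=z_n$ and performing the change of variable $z=\sigma_{z_n}(w)$ gives
$$\|S_gf\|_{BMOA}^2\ge 2\int_{\mathbb{D}}\bigl|(B\circ\sigma_{z_n})'(w)\bigr|^2\,\bigl|g(\sigma_{z_n}(w))\bigr|^2\log\frac{1}{|w|}\,dA(w).$$
Now I would apply Lemma~5 to the analytic function $F_n(w)=(B\circ\sigma_{z_n})'(w)\,g(\sigma_{z_n}(w))$, whose value at the origin satisfies $|F_n(0)|=(1-|z_n|^2)|B'(z_n)|\,|g(z_n)|$ (since $\sigma_{z_n}'(0)=-(1-|z_n|^2)$ and $\sigma_{z_n}(0)=z_n$). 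This yields $\|S_gf\|_{BMOA}^2\ge|F_n(0)|^2=\bigl[(1-|z_n|^2)|B'(z_n)|\bigr]^2|g(z_n)|^2$. Letting $n\to\infty$ and using the thinness identity $(1-|z_n|^2)|B'(z_n)|\to1$ together with $|g(z_n)|\to\lambda$ gives $\|S_gf\|_{BMOA}\ge\lambda$, so equality holds and $f$ is extremal. The main obstacle is precisely this lower bound: one must localize the $BMOA$ seminorm at the zeros $z_n$ and recognize, after the M\"obius change of variable, that Lemma~5 applied to the product $B'\cdot g$ transported by $\sigma_{z_n}$ converts the derivative normalization $(1-|z_n|^2)|B'(z_n)|\to1$ of the thin product into the desired concentration of mass where $|g|$ is nearly maximal.
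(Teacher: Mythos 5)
Your proposal is correct and follows essentially the same route as the paper: the same thin Blaschke product construction, the same candidate $B-B(0)$, and the same localization of the $BMOA$ seminorm at $a=z_n$ followed by Lemma~5 applied after the M\"obius change of variable. The only difference is cosmetic: you verify $\|B-B(0)\|_{BMOA}=1$ directly via the Littlewood--Paley identity, whereas the paper cites Proposition 2.2 of \cite{LJ} for this fact.
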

\begin{proof}Similar to the proof of the Theorem 2, we need only to construct an extremal function for non-constant $g$. Choose a
sequence $\{z_{n}\}\subset \mathbb{D}$ with $\lim_{n\rightarrow
\infty}|g(z_{n})|=\sup_{z\in \mathbb{D}}|g(z)|$ since $g\in
H^{\infty}$. Using the Lemma A, we obtain a thin Blaschke product
$B(z)$ whose zeros is a subsequence of $\{z_{n}\}$, which is denoted
also by $\{z_{n}\}$ for convenience. Consider $h(z)=B(z)-B(0)$, then
$\|h\|_{BMOA}=1$ by Proposition 2.2 of \cite{LJ}. Note that
$$\lim_{n\rightarrow \infty}(1-|z_{n}|^{2})|h'(z_{n})|=1\ \ \mbox{and}\ \ \lim_{n\rightarrow \infty}|g(z_{n})|
=\sup_{z\in \mathbb{D}}|g(z)|.$$
We have
\begin{equation}
\begin{split}\nonumber
\|S_{g}h\|_{BMOA}&=\sup_{a\in\mathbb D}\Big(\int_{\mathbb{D}}2|h'(z)|^{2}|g(z)|^{2}\log\frac{1}{|\sigma_a(z)|}dA(z)\Big)^{1/2}\\
& =\sup_{a\in\mathbb D}\Big(2\int_{\mathbb{D}}|h'(\sigma_{a}(w))||g(\sigma_{a}(w))|^{2}|\sigma_{a}'(w)|^{2}\log \frac{1}{|w|}dA(w)\Big)^{1/2}\\
& \ge\Big(2\int_{\mathbb{D}}|h'(\sigma_{z_{n}}(w))||g(\sigma_{z_{n}}(w))|^{2}|\sigma_{z_{n}}'(w)|^{2}\log
\frac{1}{|w|}dA(w)\Big)^{1/2}
\\ &\ge (1-|z_{n}|^{2})|h'(z_{n})||g(z_{n})|,
\end{split}
\end{equation}
where the Lemma 5 is used. Let $n\rightarrow\infty$, then
$\|S_gh\|_{BMOA}\ge\sup_{z\in\mathbb D}|g(z)|$ and
$\|S_gh\|_{BMOA}=\sup_{z\in\mathbb D}|g(z)|=\|S_g\|$ since
$\|h\|_{BMOA}=1$. Hence the function $h$ is an extremal function for
the norm of $S_{g}$ and $S_{g}$ is norm-attaining on $BMOA$.
\end{proof}

\end{document}